\numberwithin{equation}{section}
\newcommand{\CC}{\mathbb{C}}
\newcommand{\PP}{\mathbb{P}}
\newcommand{\QQ}{\mathbb{Q}}
\newcommand{\ZZ}{\mathbb{Z}}
\def\cln{\colon}
\def\cA{{\mathcal A}}
\newcommand{\cal}{\mathcal}
\def\cA{{\cal A}}
\def\cF{{\cal F}}
\def\cH{{\cal H}}
\def\cL{{\cal L}}
\def\cM{{\cal M}}
\def\cO{{\cal O}}
\def\cP{{\cal P}}
\def\cU{{\cal U}}
\def\begeq{\begin{equation}}
\def\endeq{\end{equation}}
\def\and{\quad{\rm and}\quad}
\def\and{\quad\text{and}\quad}
\newtheorem{prop}{Proposition}[section]
\newtheorem{theo}[prop]{Theorem}
\newtheorem{lemm}[prop]{Lemma}
\newtheorem{coro}[prop]{Corollary}
\newtheorem{conj}[prop]{Conjecture}
\newtheorem{defi-prop}[prop]{Definition-Proposition}
\newtheorem{quest}[prop]{Question}
\newtheorem{remark}[prop]{Remark}
\def\dbar{\overline{\partial}}
\def\beq{\begin{equation}}
\def\eeq{\end{equation}}
\def\bee{\begin{equation}}
\def\eeq{\end{equation}}
\title[]{Deformation rigidity for projective manifolds and isotriviality of smooth families
}
\thanks{The first author is supported by NSFC (No. 12271073 and 12271412), the second author is supported by NSFC (No. 12271073).}
\thanks{Keywords: deformation rigidity, isotrivial,  smooth family, K\"ahler morphism, minimal model}
\thanks{MSC(2010): 14D15, 14E30, 14J10}
\date{}
\author{Mu-Lin Li,~~~~~~~~Xiao-Lei Liu}
\begin{document}

\begin{abstract}Let $\pi\cln X\to \Delta^m$ be a proper smooth K\"ahler morphism from a complex manifold $X$ to the unit polydisc $\Delta^m$. Suppose the fibers over the complement of a proper analytic subset are biholomorphic to a fixed projective manifold $S$. If the canonical line bundle of $S$  is semiample, then we show that all fibers  over $\Delta^m$ are biholomorphic to $S$. As an application,  we obtain that for smooth families where the canonical line bundle of the generic fiber is semiample, birational isotriviality is equivalent to isotriviality. Moreover, we establish a new Parshin-Arakelov type isotriviality criterion.
\end{abstract}

\maketitle

\section{introduction}
\subsection{Deformation rigidity of projective manifolds}
In this paper, we study   {\it smooth families} of compact complex manifolds, which we define as proper smooth morphisms $\pi\cln X\to B$ between complex manifolds.  For each $t\in B$, denote by  $X_t=\pi^{-1}(t)$  the corresponding fiber of $\pi$.  Deformation rigidity of compact complex manifolds — which centers on the following question — has been studied since at least the work of   Bott \cite{Bo} and Kodaira-Spencer \cite{KS,KoS60}. 
\begin{quest}\label{quest}
Let $\pi\cln X\to \Delta=\{t\in\CC||t|<1\}$ be a smooth family of compact complex manifolds. Suppose that all fibers $X_t$ with $t\in\Delta^*=\Delta\setminus  \{0\}$  are biholomorphic to a fixed compact complex manifold $S$. When  is the central fiber $X_0$  also biholomorphic to $S$ (i.e., $X_0\cong S$)?
\end{quest}

We focus on the special case where  $S$ is a  projective manifold. From the perspective of algebraic geometry, projective manifolds $S$ admit a coarse classification via their  Kodaira dimension $\kappa(S)$. For projective manifolds of negative Kodaira dimension,  numerous results are available on deformation rigidity.  Notably,  it is well known that Hirzebruch surfaces are not deformation  rigid.
  For positive results, Siu \cite{Siu0}, Hwang-Mok \cite{HN1,HN2,HN3}  have made foundational contributions. For example, they established the deformation rigidity of  rational homogeneous spaces $S$  with the second Betti number $b_2(S)=1$, under the assumption that $\pi$ is a K\"ahler morphism. In what follows, we henceforth  assume that the morphism $\pi$ in Question \ref{quest} is K\"ahler.

 The algebraic formulation of deformation rigidity was investigated  by Matsusaka and Mumford in 1964, who focused on smooth polarized varieties.  A {\it polarized variety} is a pair $(S,\cH)$, consisting of a projective variety $S$ and an ample line bundle $\cH$ on $S$. Assume that all fibers of $\pi\colon X\to\Delta$ are smooth projective polarized varieties, and that all  fibers over $\Delta^*$ are isomorphic as polarized varieties. Then the first fundamental theorem (Theorem 2) of \cite{MM} shows that all fibers of $\pi$ are isomorphic provided that the generic fiber of $\pi$ is non-ruled.

  For projective manifolds with low Kodaira dimension, the first author established several deformation rigidity results in \cite{L1}. Specifically, if $S$ is an abelian variety or a principal bundle of abelian varieties  over a curve $C$ with genus $g(C)>1$, the first author proved in \cite[Theorem 1.2]{L1} that the deformation rigidity property above holds — meaning  $X_0\cong S$. Note that these manifolds satisfy $0\leq\kappa(S)\leq1$.

   However, to the best of our knowledge, there have been few studies addressing the deformation rigidity Question 1.1 for projective manifolds of higher Kodaira dimension — this is the central focus of the present paper.  We investigate deformation rigidity for projective manifolds $S$ equipped with  semiample canonical line bundle, which necessarily satisfy $\kappa(S)\geq0$. Our main result is stated below.
   
\begin{theo}\label{res1} 
 Let $\pi\cln X\to \Delta^m:=\{(z_1,\ldots,z_m)\in\CC^m||z_i|<1\}$ be a morphism satisfying the following property:
    
    (P):~$\pi$ is K\"ahler, and there exists a  domain $U$ in $\Delta^n$ obtained by removing a proper analytic subset such that, for every $t\in U$, the fiber $X_t$ is biholomorphic to a fixed projective manifold $S$. 
    
    If the canonical line bundle $\omega_S$ is semiample,  then every fiber of $\pi$ is biholomorphic to $S$.
\end{theo}

    
    

Before presenting the applications of this result, we offer several remarks.

Firstly, we want to compare Theorem \ref{res1} (for $m=1$) with some known results.

 On the one hand, we compare it to  the above mentioned Matsusaka-Mumford's theorem on smooth polarized varieties.  We note that the positivity assumption in our result is semiampleness while ampleness is needed in \cite[Theorem 2]{MM}. Moreover, we do not assume that the central fiber $X_0$  is projective polarized, nor that the isomorphisms between fibers over $\Delta^*$  preserve polarizations.  In particular, when the canonical line bundle of the generic fiber is semiample, our result is stronger than \cite[Theorem 2]{MM}.

On the other hand, we remark that Theorem \ref{res1} generalizes the main result (i.e., \cite[Theorem 1.2]{L1}) of \cite{L1}, where one knows that the canonical line bundle $\omega_S$ of $S$ is semiample.

Secondly, we want to give a remark comparing Theorem \ref{res1} to the non-separatedness of the moduli of  hyperk\"ahler manifolds.   

\begin{remark}
 It is shown (see \cite{Huy97,Huy99}) that any two birational hyperk\"ahler manifolds $Y$ and $Y'$ define non-separated points in the moduli space of  hyperk\"ahler manifolds. Equivalently,  there exist two  smooth families $\pi\cln X\to \Delta$ and  $\pi'\cln X'\to \Delta$ with $X_0\cong Y$ and $X_0'\cong Y'$ respectively,   such  that $\pi$ and $\pi'$ are isomorphic over $\Delta^*$. 
By contrast,  we emphasize that our setting imposes the additional requirement that  $\pi$ is a K\"ahler  morphism. 
\end{remark}

Finally, combining with the {abundance conjecture}, it is clear that we can obtain more results as applications of Theorem \ref{res1}.

 Recall that a projective manifold $S$ is  {\it minimal}, if  its canonical line bundle   $\omega_S$ is  nef. 
 A minimal projective manifold  $S$ is {\it good} if $\omega_S$ is semiample. The abundance conjecture  asserts that  every minimal projective manifold  is  good.

The abundance conjecture is classically known for curves and surfaces, whereas for threefolds it was a fantastic achievement (see  \cite{KMM}). In higher dimensions, the conjecture is also known to hold for manifolds of general type (see \cite{K0}). Consequently, for minimal manifolds $S$  in these cases, the canonical line bundle $\omega_S$ is semiample, and we immediately obtain the following corollary by applying Theorem \ref{res1}.

\begin{coro}\label{thmd23}
Let $\pi\cln X\to \Delta^m$ and $S$ satisfy the property $(P)$. If $S$ is minimal and satisfies that  either  $\dim S\le 3$ or $S$ is of general type, then  every fiber of $\pi$ is biholomorphic to $S$.
\end{coro}

This naturally leads to the following conjecture, which seems to be easier to carry out than the abundance conjecture:

\begin{conj} Let $\pi\cln X\to \Delta^m$ and $S$ satisfy the property $(P)$.  If $S$ is minimal with $\dim S\ge 4$,  then  every fiber of $\pi$ is biholomorphic to $S$.
\end{conj}

\subsection{Two types of isotriviality criteria of smooth families}
Throughout this paper, we always work over the complex number field $\CC$. 

We now present  some applications to the triviality of  smooth projective families $f\cln X\to B$. A family $f$ of varieties is {\it isotrivial} if all fibers of $f$ are isomorphic to each other.  We note that  isotriviality has different definitions in different papers.

 Let $B$ be a smooth projective curve and $\Sigma$ be a finite subset of $B$.  In 1962,  Shafarevich conjectured that the set of non-isotrivial families of smooth projective curves of genus $g\geq2$ over $B\setminus\Sigma$ is finite. Since then, isotriviality criteria for families have been a topic of enduring interest, with two primary types of criteria emerging from the Shafarevich conjecture, as outlined below.

{\bf 1). Hyperbolic isotriviality criterion}. One type of isotriviality criterion arises from the hyperbolicity part of the Shafarevich conjecture. The hyperbolic criterion says that:
if   $2g(B)-2+\#\Sigma\leq0$,  then  $f$ is isotrivial.
The Shafarevich conjecture, which later played an important role in Faltings' proof of the Mordell conjecture, was confirmed by Parshin \cite{Pa} for $\Sigma=\emptyset$   and by Arakelov \cite{Ar} in general.

   It is a natural and important question whether similar hyperbolic isotriviality criteria hold for families of higher dimensional varieties over higher dimensional bases. Because of the absence of minimal models among smooth varieties in higher dimensions, the isotriviality  becomes  birational isotriviality.

   A family $f\cln X\to B$ of varieties is called {\it birationally isotrivial} if there exists a dense open subset $B^0\subset B$,  a finite cover $B'\to B^0$ and a commutative diagram
 $$\xymatrix{
  X\times_BB' \ar@{-->}[rr]^{\Phi} \ar[dr]_{f_{B'}}
                &  &    B'\times X_0 \ar[dl]^{pr_1}    \\
                & B'                 }
                $$
with $\Phi$ is birational. In other words, denoting by $K=\CC(B)$ the function field of $B$, if the geometric generic fiber $X\times_B\overline K$ is birational, over $\overline K$, to $X_0\times_\CC\overline K$, then $f$ is called birationally isotrivial. Here $\overline K$ is the algebraic closure.
   It is known that a family of varieties is birationally isotrivial if all the fibers are birational to each other \cite{BB15}.

    Families of higher dimensional varieties over curves have been extensively  studied by several authors in recent years and the birational hyperbolic isotriviality criteria are now  well understood.  The strongest results known were obtained in \cite{VZ01, VZ02} and \cite{Kov02}; for a comprehensive overview, we refer the reader to survey articles \cite{Vi01} and \cite{Kov03}. For higher-dimensional bases, the birational hyperbolic isotriviality has also been investigated,  with \cite{WW23} providing an up-to-date summary of progress. 

A natural question is: when does birational isotriviality imply isotriviality? As an application of  Theorem \ref{res1}, we can show that for smooth families whose generic fiber is a good minimal manifold,   birational isotriviality is equivalent to  isotriviality.

\begin{theo}\label{g-gen-intr-b}(=Theorem \ref{g-gen-intr})
  Let $f\colon X\to B$ be a smooth projective family between smooth projective varieties. Suppose that the generic fiber of $f$  is good minimal. If $f$ is birationally isotrivial, then $f$ is isotrivial.
\end{theo}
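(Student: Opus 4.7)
My plan is to reduce the statement to Theorem \ref{gres1}. That theorem already produces isotriviality from the existence of a Zariski open $U\sub B$ over which the fibers of $f$ are pairwise isomorphic. Thus it suffices, starting from birational isotriviality together with good-minimality, to construct such a $U$.

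First I would unpack birational isotriviality: there is a dense open $B^0\sub B$, a finite cover $\sigma\cln B'\to B^0$, and a $B'$-birational map $\Phi\cln X\times_B B'\dashrightarrow B'\times X_0$. Since $\Phi$ is biregular on a dense open and $B'$ is a smooth curve, only finitely many fibers of $X\times_B B'\to B'$ fail to be mapped biregularly onto an open subset of the corresponding constant fiber. Descending through $\sigma$, after removing finitely many points of $B$ we obtain a Zariski open $V\sub B$ such that $X_b$ is $\CC$-birational to $X_0$ for every $b\in V$. This matches the characterization of birational isotriviality recalled after \cite{BB15}: the fibers over $V$ are pairwise $\CC$-birational.

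The main obstacle, and the heart of the proof, is to upgrade these fiberwise birational equivalences to biregular isomorphisms. Here the good-minimal hypothesis is essential. Because $f$ is smooth, Siu's invariance of plurigenera gives constancy of plurigenera across fibers; combined with semiampleness of $\omega_{X_\eta}$, and using that the relative base locus of $|mK_{X/B}|$ is closed for $m$ sufficiently divisible, after shrinking $V$ we may assume that each $\omega_{X_b}$ with $b\in V$ is semiample. Hence every fiber over $V$ is a smooth projective good minimal model in the fixed birational class of $X_0$. Two such models are \emph{a priori} only $K$-equivalent, i.e.\ connected by a sequence of flops; the essential remaining point is that in our smooth family no such flop can truly separate the fibers. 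The cleanest way I would make this precise is to form the relative canonical model $X\to Z\to B$, which exists after further shrinking $V$ because $f_*\omega_{X/B}^{\otimes m}$ is then locally free and base-change-compatible. The fiber $Z_b$ is the canonical model of $X_b$, hence canonically isomorphic to $Z_0$ for every $b\in V$, so each $X_b$ is a smooth crepant resolution of the fixed variety $Z_0$; the fact that these resolutions fit into the smooth family $X\to B$ over $V$ then forces the family of resolutions to be locally trivial, yielding $X_b\cong X_0$ for all $b\in V$.

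With such a $V$ in hand, Theorem \ref{gres1} applies verbatim and delivers isotriviality of $f$, completing the proof.
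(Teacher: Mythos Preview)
Your reduction to Theorem \ref{gres1} is correct, and the initial unpacking of birational isotriviality is fine. The gap is in the final sentence of part (3): the assertion that ``the fact that these resolutions fit into the smooth family $X\to B$ over $V$ then forces the family of resolutions to be locally trivial'' is precisely what has to be proved and is left unjustified. Smooth good minimal models in a fixed birational class need not be isomorphic---they are in general only connected by flops---so knowing that every $X_b$ is a smooth crepant resolution of the same $Z_0$ does not by itself give $X_b\cong X_0$. Your canonical-model argument also tacitly assumes the fibers are of general type: when $\kappa(X_b)<\dim X_b$ the map $X_b\to Z_b$ is the Iitaka fibration onto a lower-dimensional base rather than a resolution, and the passage from $Z_b\cong Z_0$ to $X_b\cong X_0$ becomes even more remote.

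The paper bypasses this difficulty entirely by invoking Kawamata's structural results \cite[Lemma 7.1 and Corollary 7.3]{K0}: under the good-minimal hypothesis on the generic fiber, the birational trivialization implicit in $\mathrm{Var}(f)=0$ can be upgraded to a \emph{biregular} isomorphism $X\times_B U\cong F\times U$ after a generically finite base change $U\to V\subset B$. The key point is that this biregular product structure is obtained at the level of total spaces over $U$ (using that birational maps between good minimal models are isomorphisms in codimension one, together with the relative setup), not assembled fiber by fiber. Once this is in hand, all fibers over $V$ are isomorphic to $F$, and Theorem \ref{gres1} concludes exactly as you say. If you want to avoid citing \cite{K0} as a black box, that lemma is what you would need to reprove; your sketch does not yet do so.
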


Combining with the  well-understood birational  hyperbolic isotriviality criteria mentioned above  and  the known results on the abundance conjecture,  it is straightforward to derive a wealth of hyperbolic isotriviality conclusions. We do not aim to enumerate them here, but instead illustrate this with an example using \cite[Theorem 0.1]{VZ01}.

\begin{coro}
   Let $f\colon X\to B$ be a smooth projective family between smooth projective varieties where $B$ is a  curve. Suppose that  the generic fiber of $f$  is good minimal. If $g(B)\leq1$, then $f$ is isotrivial.
\end{coro}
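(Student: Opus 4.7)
The plan is to reduce to Theorem \ref{gres1}, so it suffices to exhibit a Zariski open $U\subset B$ on which all fibers of $f$ are mutually isomorphic.

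First I would unpack the birational isotriviality: it supplies a finite cover $g\colon B'\to B^0$ of a dense open $B^0\subset B$ and a birational $B'$-map
\[
\Phi\colon X\times_B B'\dashrightarrow B'\times X_0.
\]
Because source and target are smooth, the indeterminacy locus of $\Phi$ has codimension $\geq 2$ in the total space. Since $\dim B'=1$, a dimension count shows that for a general closed point $b'\in B'$ the intersection of this locus with the fiber $X_{g(b')}$ has codimension $\geq 2$ inside $X_{g(b')}$, and the same holds for $\Phi^{-1}$. Consequently $\Phi$ restricts to a birational map $\Phi_{b'}\colon X_{g(b')}\dashrightarrow X_0$ that is an isomorphism outside a subset of codimension two.

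The main work is to upgrade $\Phi_{b'}$ to a genuine isomorphism. Since $f$ is smooth and the generic fiber is good minimal, the invariance of plurigenera makes $h^0(X_b,mK_{X_b})$ locally constant on $B$; combining this with the smoothness of $f$, after shrinking $B^0$ we may assume $K_{X_b}$ is semiample for every $b\in B^0$. Thus $X_{g(b')}$ and $X_0$ are two smooth projective good minimal models in the same birational class, sharing a common canonical model. In dimension two no flops exist between smooth minimal models, so $\Phi_{b'}$ is automatically an isomorphism. In higher dimension, any nontrivial flopping contraction appearing in $\Phi_{b'}$ would have to deform in the one-parameter family over $B'$; the smoothness of $X\to B$ and of $B'\times X_0\to B'$, combined with the fact that $\Phi$ is already an isomorphism in relative codimension one, should force this flopping locus to be empty. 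A cleaner alternative route is to form the relative canonical model $\mathcal{Y}\to B^0$ (well-defined by the good minimal hypothesis and finite generation), observe that $\Phi$ descends to an isomorphism $\mathcal{Y}\times_{B^0}B'\cong B'\times Y_0$, and then transport the product decomposition back to $X$ via the relative Iitaka fibration $X\dashrightarrow\mathcal{Y}$.

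Either way, one concludes that there is a dense Zariski open $U\subset B$ with $X_b\cong X_0$ for every $b\in U$, and Theorem \ref{gres1} then finishes the proof. The principal obstacle is the flop-exclusion step: Kawamata's theorem only yields isomorphism in codimension one, and ruling out honest flops in a smooth one-parameter family is the technical heart of the argument, where the \emph{good} minimal (as opposed to merely minimal) hypothesis must be decisively exploited.
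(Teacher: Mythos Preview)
Your proposal has a fundamental gap: you never use the hypothesis $g(B)\le 1$. The corollary does \emph{not} assume birational isotriviality; it assumes the genus condition on the base. The very first line of your argument, ``unpack the birational isotriviality,'' is unpacking something that has not been given. The paper's proof of this corollary is: first invoke the hyperbolic isotriviality criterion of Viehweg--Zuo \cite[Theorem 0.1]{VZ01}, which says that a smooth family with good minimal generic fiber over a curve of genus $\le 1$ is \emph{birationally} isotrivial; then apply Theorem~\ref{g-gen-intr-b} to upgrade birational isotriviality to isotriviality. You have skipped the first step entirely.

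Even if one grants you birational isotriviality (so that you are in effect re-proving Theorem~\ref{g-gen-intr-b}), the ``upgrade'' portion of your argument is incomplete. You correctly identify the flop obstruction as the crux, but you do not resolve it: the claim that a flopping locus ``should be empty'' because the family is smooth is not justified, and the alternative via the relative canonical model only yields an isomorphism of canonical models, not of the original fibers. The paper avoids this difficulty altogether: in the proof of Theorem~\ref{g-gen-intr} it appeals to Kawamata's structure results \cite[Lemma 7.1, Corollary 7.3]{K0}, which for fibrations with good minimal generic fiber produce an \'etale cover $V'\to V$ of an open $V\subset B$ on which the pullback is a genuine \emph{product} $F\times V'$ (not merely birational to one). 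This immediately gives isomorphic fibers over $V$, and Theorem~\ref{gres1} then finishes. The good minimal hypothesis enters precisely here, through uniqueness of good minimal models in a birational class, which is what makes Kawamata's lemma applicable.
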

Under the hypotheses of the corollary,  \cite[Theorem 0.1]{VZ01} shows that $f$  is birationally isotrivial, from which the corollary follows immediately by Theorem \ref{g-gen-intr-b}.

Remark that if the relative canonical line bundle $\omega_{X/B}$ of the smooth family $f$ is not ample, but the restriction of $\omega_{X/B}$ to every fiber is ample, then  the above result has been obtained by Kov\'acs \cite[Theorem 2.16]{Kov96}, which relies on the first fundamental theorem (Theorem 2) of \cite{MM}.

{\bf 2). Parshin-Arakelov isotriviality criterion}. The other type of isotriviality criterion arises from Parshin-Arakelov's work on the Shafarevich conjecture.  It can be stated as follows:
\begin{quote}
Let $f\colon X\to B$ be a family of curves of genus $g\geq2$ where $B$ is a smooth projective curve. Suppose that  no fiber contains a $(-1)$-curve.  If  $\deg(\det f_*\omega_{X/B})=0$,  then $f$ is isotrivial.
\end{quote}

An interesting observation here is that $\deg(\det f_*\omega_{X/B})=0$ if and only if $\deg(\det f_*\omega_{X/B}^{\otimes k})=0$ for some $k\geq1$. By the theory of fibered surfaces, this is further equivalent to   $\deg(\det f_*\omega_{X/B}^{\otimes k})=0$ for all $k\geq1$.

Despite its importance, this type of isotriviality criterion has seen very little generalization to date (see, for example, \cite{LX}).  In order to obtain a further understanding,   we  present the following generalized Parshin-Arakelov isotriviality criteria for smooth families, see Corollaries \ref{g-gen-2} and \ref{g-gen-intr-coro}.

\begin{theo}\label{g-gen}
  Let $f\colon X\to B$ be a smooth projective family between smooth projective varieties,  and suppose the generic fiber $F$ of $f$ is minimal. Assume that one of the following two conditions holds:

  i) either $F$  is of general type or $\dim F\leq 3$, and $\deg(\det f_*\omega_{X/B}^{\otimes k})=0$ for all $k\geq2$;

  ii) $B$ is a curve,   $F$  is of general type and $\deg(\det f_*\omega_{X/B}^{\otimes k})=0$ for some sufficiently large $k$.

 \noindent Then $f$ is isotrivial.

\end{theo}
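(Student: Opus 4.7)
The plan is to reduce Theorem \ref{g-gen} to Theorem \ref{g-gen-intr-b} (birational isotriviality implies isotriviality in the good minimal case), by using a Viehweg--Zuo type argument to show that the vanishing of $\deg \det f_*\omega_{X/B}^{\otimes k}$ forces birational isotriviality. The scheme is: abundance $\Rightarrow$ generic fiber is good minimal; Viehweg's positivity $\Rightarrow$ $\det f_*\omega_{X/B}^{\otimes k}$ is nef; vanishing of its degree $\Rightarrow$ birational isotriviality; Theorem \ref{g-gen-intr-b} $\Rightarrow$ isotriviality.

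First I would invoke the known abundance results to promote \emph{minimal} to \emph{good minimal} for the generic fiber $F$. In case $\mathrm{i)}$ this uses Kawamata--Miyaoka--Mori abundance in dimension $\le 3$ \cite{KMM}; in case $\mathrm{ii)}$ this uses Kawamata's theorem on abundance for varieties of general type \cite{K0}. Thus in both cases $\omega_F$ is semiample, and Theorem \ref{g-gen-intr-b} becomes applicable once birational isotriviality is established.

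Next I would establish birational isotriviality from the degree hypothesis. Since $f$ is smooth and $\omega_F$ is semiample, Viehweg's weak positivity theorem yields that $f_*\omega_{X/B}^{\otimes k}$ is a nef locally free sheaf on $B$ for each $k\ge 2$, so $\det f_*\omega_{X/B}^{\otimes k}$ is a nef line bundle of nonnegative degree. In case $\mathrm{ii)}$, the generic fiber is of general type, and the classical Viehweg--Zuo theorem (see e.g.\ \cite{VZ01,VZ02}) asserts that if $\deg \det f_*\omega_{X/B}^{\otimes k}=0$ for some $k\ge 2$, then the family is birationally isotrivial. In case $\mathrm{i)}$, the generic fiber has dimension $\le 3$ with semiample canonical, so $F$ admits its Iitaka fibration; via the Viehweg fiber-product construction applied to the pluricanonical moduli, the hypothesis that $\deg\det f_*\omega_{X/B}^{\otimes k}=0$ holds \emph{for every} $k\ge 2$ forces the canonical models of the fibers to form an isotrivial family, and combined with the smoothness of $f$ this yields birational isotriviality of $f$ itself.

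Finally I would apply Theorem \ref{g-gen-intr-b}: the generic fiber being good minimal together with birational isotriviality of $f$ gives the desired isotriviality. The main obstacle is the middle step, namely extracting birational isotriviality from the vanishing of $\deg \det f_*\omega_{X/B}^{\otimes k}$ in the non-general-type case $\mathrm{i)}$. For general type this is the established Viehweg--Zuo criterion, but in the merely good minimal setting one needs to carry out the argument through the relative Iitaka fibration and keep careful track of the positivity of the Hodge-theoretic pieces; the hypothesis that the degree vanishes for \emph{each} $k\ge 2$ (rather than some) is exactly what is needed to handle the pluricanonical asymptotics when $\kappa(F)<\dim F$.
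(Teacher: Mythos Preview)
Your overall architecture matches the paper exactly: promote \emph{minimal} to \emph{good minimal} via abundance, deduce $\mathrm{Var}(f)=0$ from the degree hypothesis, then apply Theorem~\ref{g-gen-intr-b}. The issue is the middle step, where your references are off in both cases.

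For case~i) you flag the implication ``$\deg\det f_*\omega_{X/B}^{\otimes k}=0$ for all $k\Rightarrow\mathrm{Var}(f)=0$'' as the main obstacle and sketch an ad hoc argument through the relative Iitaka fibration. In fact this is precisely \cite[Theorem~1.1]{K0}: Kawamata proves (assuming the generic fiber has a good minimal model) that $\kappa(\det f_*\omega_{X/B}^{\otimes k})\ge\mathrm{Var}(f)$ for sufficiently large and divisible $k$, so vanishing of all these degrees forces $\mathrm{Var}(f)=0$. The paper simply cites this; your sketch is an attempt to reprove it, and as written is not a proof.

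For case~ii) you invoke ``the classical Viehweg--Zuo theorem'' \cite{VZ01,VZ02} for the statement that $\deg\det f_*\omega_{X/B}^{\otimes k}=0$ for some $k\ge2$ implies birational isotriviality. Those papers are hyperbolicity results (constraints on the base), not this. The paper argues by contraposition: if $f$ were not isotrivial then $\mathrm{Var}(f)=1$ by Theorem~\ref{g-gen-intr-b}, so Koll\'ar \cite[Theorem on p.~363]{Kol} gives ampleness of $\det f_*\omega_{X/B}^{\otimes\eta}$ for \emph{some} $\eta$, and Esnault--Viehweg \cite[Theorem~0.1]{EW} then upgrades this to ampleness for \emph{every} $m\ge2$, contradicting the hypothesis. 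Your logical skeleton is the same, but the citation is wrong and without Koll\'ar~$+$~Esnault--Viehweg the step is unjustified.
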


As with  the well-studied  hyperbolic isotriviality criterion,  many cases of the Parshin-Arakelov isotriviality criterion are still open.

\subsection{Ideas of the proof of Theorem \ref{res1}} The main idea of the proof of our main result, Theorem \ref{res1},  is to use the separatedness of the moduli space $P_h$ of polarized manifolds.
Our major task is  the following two parts: one is to construct a $\pi$-ample line bundle on $X$, and the other is to show that the canonical line bundle $\omega_{X_0}$ of $X_0$ is semiample.

In the following  explanation, we not only outline our proof, but also interpret the difficulties we encountered. The innovations to overcome these difficulties are also mentioned.

{\bf(1) New extension of line bundles}

The desired $\pi$-ample line bundle on $X$ is constructed via extension. 
 In contrast to standard extension procedures, we introduce a new kind of extension. Presicely,  we extend a line bundle $\cL_0$ on the central fiber $X_0$ to a global one $\cL$ over $X$. Moreover, if $\cL_0$ is ample, then the extended line bundle $\cL$ over $X$ is $\pi$-ample. This construction relies on the theory of variations of the Hodge structures.

{\bf (2) Semiampleness of $\omega_{X_0}$}

We need to deduce the semiampleness of $\omega_{X_0}$ from that of $\omega_{X_t}$ for each $t\neq0$. However, this implication does not hold for an arbitrary  line bundle $\cL$ on $X$, even if $\cL_t:=\cL|_{X_t}$ is very ample on $X_t\cong S$. In fact, establishing the  positivity of $\omega_{X_0}$ is the most challenging  part of this paper. Fortunately, we only need positivities of the pluricanonical and the adjoint linear systems on $X_0$.

Let $\cL$ be a $\pi$-ample line bundle over $X$. We can reduce the  proof of the semiampleness of $\omega_{X_0}$   to that of the nefness of $\omega^{\otimes m}_{X_0}\otimes \mathcal{L}_0$ for each $m\in\mathbb{N}$, by Kawamata's work on the theory of numerical Kodaira dimension.   To verify this nefness,  we  employ an  analytic approach, checking the  metric characterization of nefness. Roughly speaking, we use the theory of deformations of complex structures to construct sufficiently  many  $2$-forms on $X_0$, which have some positivity on $X_0$.  The extension of the nefness property then follows from the known  criterion of nefness on projective manifolds.






 The organization of this paper is as follows.

 In \S\ref{sec2},  we give notations and basics of moduli of polarized manifolds which will be used in our proofs.
 In \S\ref{sec3}, we give our new extension of line bundles and construct $\pi$-ample line bundles on $X$. 
 Since the semiampleness of $\omega_{X_0}$ is the most difficult part of this paper, we first prove some positivity on the central fiber in \S\ref{sec4n} as a preparation step.  
 The proof of our main result, Theorem \ref{res1}, is given in \S\ref{sec5n}. 
 In the final  section, \S\ref{sec6n}, we provide our application to the isotriviality criteria.

{\bf Acknowledgement:}  The authors would like to thank Kang Zuo and Sheng Rao  for their interest and useful discussions. They would like to thank Feng Hao for introducing them to the results of S\'andor J. Kov\'acs. They are grateful to Sheng-Li Tan, Tong Zhang, Yong Hu, Wei Xia, and Ariyan Javanpeykar  for their conversations on isotriviality criteria. Finally, the authors would like to express their sincere gratitude to
 the referees for their useful detailed suggestions, which have made the paper more  valuable and readable.

\section{Preliminaries}\label{sec2}
\subsection{Setup and notation}
 Let $S$ be a compact complex manifold. Throughout the following, we always assume that $\dim S=n$ without mention.

  Let $\Omega_S$ be the cotangent bundle of $S$, $\Omega^p_S=\wedge^p\Omega_S$, and $\omega_S=\Omega^n_S$ be the  canonical line bundle of $S$. The {\it Kodaira dimension} $\kappa(S)$ of $S$ is defined as
$$
\kappa(S)=\max\left\{k\in\mathbb{Z}_{\ge0}|\limsup\limits_{m\to \infty}\frac{h^0(S,\omega_S^{\otimes m})}{m^k}\right\},
$$
and the {\it numerical Kodaira dimension} $\nu(S)$ is  defined as
$$\nu(S)= \max\{k\in\mathbb{Z}_{\ge0}|c^k_1(\omega_S)\neq 0\in H^{2k}(S,\CC)\}.$$
It is known (see \cite{K1}) that, if $\omega_S$ is nef, then
\beq\label{eqndim}
\kappa(S)\leq\nu(S),
\eeq
and the equality holds if and only if  $\omega_S$ is semiample.

Let $\varphi\cln X\to B$ be a proper morphism of complex manifolds. Let $\cL$ be a line bundle on $X$, and $\cL_t=\cL|_{X_t}$ for any $t\in B$. We recall the following classical definitions for the reader's convenience:
\begin{itemize}

\item{$\cL$ is {\it $\varphi$-free} if  the natural morphism $\varphi^*\varphi_*\cL\to \cL$ is surjective. In particular, if $\cL$ is $\varphi$-free then it induces a morphism $X\to \mathbb{P}(\varphi_*\cL)$ over $B$;}

\item{$\cL$ is {\it $\varphi$-very ample} if $\cL$ is $\varphi$-free and the induced morphism $X\to \mathbb{P}(\varphi_*\cL)$ is a closed immersion;}

\item{ $\cL$ is {\it $\varphi$-semiample} (resp. {\it $\varphi$-ample}) if  $\cL^{\otimes k}$ is $\varphi$-free  (resp. $\varphi$-very ample) for some positive integer $k$.}
\end{itemize}

A smooth $d$-closed $(1,1)$-form $\alpha$ on $X$ is called a {\it relative K\"ahler form} for $\varphi$, if there exists an open cover $\{U_{\lambda}\}$ of $B$ and K\"ahler forms $\eta_{\lambda}$ on $U_{\lambda}$ such that $\alpha|_{\varphi^{-1}(U_\lambda)}+\varphi^*\eta_\lambda$ are K\"ahler forms on $\varphi^{-1}(U_\lambda)$. A morphism $\varphi$ is said to be {\it K\"ahler} \cite[Definition 4.1]{F1}, if there exists a relative K\"ahler form for $\varphi$.

 In what follows,  we will always assume that families $\pi$ satisfy the following condition  unless explicitly stated otherwise:

 \medskip
\begin{description}
    \item[{\underline{\boldmath{$(*)$}}} ] $\pi\cln X\to\Delta$ is a proper smooth K\"ahler morphism between complex manifolds, with $X_t\cong S$ for all $t\neq0$, where $S$ is a fixed projective manifold.
\end{description}

\subsection{Moduli of polarized manifolds} For a polynomial $h\in\QQ[T]$, let $\cP_h$ be the fibered category over the category of schemes, such that for a scheme $B$ the groupoid $\cP_h(B)$ is defined as
 \begin{align*}
  \cP_h(B)=&\bigg\{(f\cln X\to B,\cH)|f\, \mbox{is a smooth morphism};\,  \cH \, \mbox{is a line bundle on $X$;}\\
&(X_b,\cH_b)\, \mbox{is a polarized manifold,\, $\omega_{X_b}$ is semiample, $\forall b\in B$};\\
&h(m)=\chi(\cH^{\otimes m}), \mbox{ $\forall m\in \ZZ$}\bigg\}.
\end{align*}

 The arrow of $\cP_h(B)$ from $(f\cln X\to B,\cH)$ to $(f'\cln X'\to B,\cH')$ is an isomorphism $\tau\cln  X\to X'$ such that $ \cH_b$ and $ (\tau^*\cH')|_{X_b}$ are numerical equivalent for all $b\in B$. By \cite[Theorem 4.6]{Vi0}, there exists a coarse separated moduli algebraic space $P_h$ for $\mathcal{P}_h$. Note that $P_h$ is the quotient of the moduli space of polarized manifolds by compact equivalence relations (see \cite{Vi0}).

If $(\varphi\cln X\to B,\cH)$ is a smooth family of polarized manifolds with Hilbert polynomial $h(m)=\chi(\cH^{\otimes m})$ for all $m\in\ZZ$, then we have  an induced morphism
$$\mu_{\varphi}\cln B\to P_h$$
satisfying that $\mu_{\varphi}(b)=(X_b,\cH_b)\in P_h$ for all $b\in B$.

\section{Extension of line bundles}\label{sec3}

First, we recall some basics of variations of Hodge structures from \cite[Chapter 10]{Voi}, \cite[Section II.1]{G68} and \cite[Chapter 4]{CMP}.
 When $\pi\cln X\to \Delta$ is smooth, by \cite[Theorem 9.3]{Voi} there exists a diffeomorphism
\beq
T\cln X\to X_0\times \Delta
\eeq
such that $p_2\circ T=\pi$ and $T_0|_{X_0}=\mathrm{Id}_{X_0}\cln X_0\to X_0$, where $p_i~(i=1,2)$ denotes the $i$-th projection from $X_0\times \Delta$ and $T_0:=p_1\circ T\cln X\to X_0$. Let
$$\kappa_t=T_0|_{X_t}\cln X_t\to X_0$$
be the restriction of $T_0$ on $X_t$. Then $\kappa_t$ is a diffeomorphism and we denote by $\psi_t$ its inverse diffeomorphism, as illustrated in the following commutative diagram.
\begin{eqnarray*}
\xymatrix{
X_t \ar[r] \ar@<.5ex>[rd]^{\kappa_t}  &  X\ar[d]^{T_0}\ar[r]^(0.35){T}\ar[rd]^(0.35){\pi}|\hole  & X_0\times\Delta \ar[dl]^(0.6){p_1}\ar[d]^{p_2}\\
& X_0\ar@<.5ex>[lu]^{\psi_t}  & \Delta
}
\end{eqnarray*}

For each $t\in\Delta$, by using the Hodge decompostion 
\beq\label{iden-HD}
 H^2(X_t,\CC)=\bigoplus_{r,s\ge0,r+s=2}H^{r,s}(X_t,\CC),
\eeq
we get the Hodge filtration 
$$F^kH^{2}(X_t,\mathbb{C}):=\bigoplus_{2\ge r\ge k}H^{r,2-r}(X,\CC)~~~~(2\ge k\ge0)$$ 
on the complex vector space $H^{2}(X_t,\mathbb{C})$ 
  which satisfies  
\begin{equation}\label{identity-3}
  H^{r,s}(X_t,\CC)=  F^rH^{2}(X_t,\mathbb{C})\cap \overline{F^sH^{2}(X_t,\mathbb{C})}.
\end{equation}
So, by the equality
\beq\label{iden-1}
\psi_t^*H^{2}(X_t,\CC)=H^{2}(X_0,\CC),
\eeq
we can define  a Hodge filtration on the space $H^{2}(X_0,\CC)$ as follows:
\beq\nonumber
F^k_t:=\psi_t^*F^kH^{2}(X_t,\mathbb{C}).
\eeq
Let $b^{k}=\dim F^k_t$, then the subspaces $F^k_t$ induce  the following holomorphic morphism as in \cite[(1.1) Theorem]{G68} or \cite[Theorem 10.9]{Voi}
$$ \cP^{k}\cln  \Delta\to \mathrm{Gr}(b^{k},H^2(X_0,\CC)),~~~~\cP^{k}(t)=F^k_t,$$
where $\mathrm{Gr}(b^{k},H^2(X_0,\CC))$ is the  Grassmannian of $b^{k}$-dimensional vector subspaces of $H^2(X_0,\CC)$.

\begin{lemm}\label{lemm4}
Let $\pi\cln X\to \Delta$ satisfy $(*)$ and $\cA$ be a line bundle  on $X_0$, then there exists a line bundle $\cL$ on $X$ such that $c_1(\cL|_{X_0})=c_1(\cA)\in H^2(X_0,\CC)$.
\end{lemm}
\begin{proof}
Since  $\pi$ is a proper smooth K\"ahler morphism, we can assume that $X$ is K\"ahler after shrinking the disc small enough. So  $\dim H^{p,q}(X_t,\CC)$ is independent of the choice of $t\in \Delta$. By \cite[Theorem 1.0.3]{Ba1}  (see also \cite[Theorem 1.2]{P3} or \cite[Theorem 1.4]{RT1}), $X_0$ is Moishezon,  and thus   $X_0$ is projective for  it is also K\"ahler.

{\bf Step 1}. First we want to show that the morphism $\cP^{k}$ is constant.

Let $\tilde{\pi}=\pi|_{{\pi}^{-1}(\Delta^*)}\cln \pi^{-1}(\Delta^*)\to \Delta^*$ be the restriction of $\pi$ on $ \pi^{-1}(\Delta^*)$. Since $X_t\cong S$ for all $t\neq 0$, the smooth family $\tilde{\pi}$ is locally trivial by \cite{FG}. Thus, for each $t\in\Delta^*$, there exists a small neighborhood $U_t\subset\Delta^*$ of $t$ such that
$$\tilde{\pi}^{-1}(U_t)\cong U_t\times S.$$
It follows that the Kodaira-Spencer map for $\pi|_{\tilde{\pi}^{-1}(U_t)}$
$$\rho_t\cln T_{U_t,t}\to H^1(X_t,T_{X_t})$$
is zero, and then the differential $d\cP^{k}$ vanishes on $\Delta^*$ by \cite[(1.23) Theorem]{G68} or \cite[Theorem 10.4]{Voi}. Consequently, we have that $ \cP^{k}$ is constant on $\Delta$.
Therefore
\begin{eqnarray*}
    \psi_t^*F^kH^{2}(X_t,\mathbb{C})=F^kH^{2}(X_0,\mathbb{C}) 
\end{eqnarray*}
for $0\le k\le 2$.
Then, by the formula (\ref{identity-3}), we have
\beq\label{iden-2}
\psi_t^*H^{r,s}(X_t,\CC)=H^{r,s}(X_0,\CC)
\eeq
for $t\in \Delta$ and $0\le r,s\le 2$ with  $r+s=2$.

{\bf Step 2}. We describe the desired line bundle locally as $\kappa_t^*c_1(\cA)$.

By the exponential sequence, we have the following exact sequence
\beq\label{eqnexp-1}
H^1(X_t,\cO_{X_t}^*)\stackrel{c_1}{\longrightarrow}H^2(X_t,\ZZ)
\stackrel{\iota_t}{\longrightarrow}H^2(X_t,\cO_{X_t})\to.
\eeq
Moreover, by the Hodge decomposition \eqref{iden-HD},
the map $\iota_t$ is the composition of the following maps
\beq
H^2(X_t,\ZZ)\hookrightarrow H^2(X_t,\CC)\to H^{0,2}(X_t,\CC)\cong H^2(X_t,\cO_{X_t}).
\eeq
Combining with \eqref{iden-1} and (\ref{iden-2}), we see that
$$\iota_0=\psi_t^* \circ\iota_t\circ\kappa_t^*.$$
Therefore  we have that
\beq\label{iden-2-0}
\iota_t(\kappa_t^*c_1(\cA))=(\kappa_t^*\circ\iota_0\circ\psi_t^*)(\kappa_t^*c_1(\cA))
=\kappa_t^*\big(\iota_0(c_1(\cA))\big)=0
\eeq
for each $t\in\Delta$, since $\iota_0(c_1(\cA))=0$ by \eqref{eqnexp-1}.

{\bf Step 3}.  We show that our constructed line bundle corresponds to $T_0^*c_1(\cA)$.

Consider the exponential sequence for $X$
\beq\label{eqnexp-2}
H^1(X,\cO_{X}^*)\stackrel{c_1}{\longrightarrow}H^2(X,\ZZ)
\stackrel{\iota}{\longrightarrow}H^2(X,\cO_{X})\to.
\eeq
Applying the Leray spectral sequence to $\pi$ for the sheaves $\ZZ$ and $\cO_X$, we have the following commutative diagram
 $$
\begin{CD}
H^1(X,\cO_{X}^*)@>^{c_1}>>H^2(X,\ZZ)@>^{\iota}>>H^2(X,\cO_{X})@>>>
\\
@.@V^{\cong}VV @V^{\cong}VV \\
@.H^0(\Delta, R^2\pi_*\ZZ)@>^{\sigma}>>H^0(\Delta,R^2\pi_*\cO_X)@>>>.
\end{CD}
$$
Similarly, by the base change theorem, for each $t\in\Delta$,
we have the following equality
$$\sigma_t=\iota_t\cln H^2(X_t,\ZZ)\to H^2(X_t,\cO_{X_t}).$$
By \eqref{iden-2-0}
$$\sigma(T_0^*c_1(\cA))(t)=\sigma_t(\kappa_t^*c_1(\cA))=\iota_t(\kappa_t^*c_1(\cA))=0$$ for all $t\in\Delta$, so we have
\beq
\iota(T_0^*c_1(\cA))=\sigma(T_0^*c_1(\cA))=0.\nonumber
\eeq
Therefore, by \eqref{eqnexp-2}, there exists a line bundle $\cL$ on $X$ such that the first Chern class of the restriction $\cL|_{X_0}$ satisfies that $c_1(\cL|_{X_0})=c_1(\cA)$.

 \end{proof}

\begin{prop}\label{proj}Let $\pi\cln X\to \Delta$ satisfy $(*)$, then $\pi$ is a projective morphism.
\end{prop}
\begin{proof}As shown in the first paragraph of the proof of Lemma \ref{lemm4}, $X_0$ is projective. So we can take  an ample line bundle $\cA$  on $X_0$. By Lemma \ref{lemm4}, there exists a line bundle $\cL$ on $X$ satisfying $c_1(\cL|_{X_0})=c_1(\cA)$. Let $\cL_t= \cL|_{X_t}$ be the restriction of $\cL$ on $X_t$ for all $t\in\Delta$, then $\cL_0$ is an ample line bundle on $X_0$. For a sufficiently large positive number $m\in\mathbb{N}$, we have $H^{i}(X_0,\cL_0^{\otimes m})=0$ for $i\ge1$ and $\cL_0^{\otimes m}$ is very ample. By \cite[Theorem 4.2, Chapter IV]{BanS}, there exists a neighborhood $U\subset\Delta$ of $0$ such that $\cL_t^{\otimes m}$ is very ample for each $t\in U$. This implies that $\cL_t$ is ample for each $t\in \Delta$,  because $X_t\cong S$  for $t\neq0$ and the ampleness of $\cL_t$ only depends on its numerical equivalence class. Then there exists a sufficiently large positive number $\bar{m}\in\mathbb{N}$ such that $H^{i}(X_t,\cL_t^{\otimes \bar m})=0$ for $i\ge1$ and $\cL^{\otimes \bar m}_t$ are very ample for all $t\in\Delta$. Therefore the morphism
\beq
\pi^*\pi_*\cL^{{\otimes \bar m}}\to \cL^{{\otimes \bar m}}\nonumber
\eeq
induces an embedding $X\to \mbox{Proj}\left(\pi_*\cL^{{\otimes \bar m}}\right)$. Thus $\pi$ is a projective morphism.
 
\end{proof}

\section{Nefness over the central fiber}\label{sec4n}

In this section, we prove a special case of nefness over the central fiber   $X_0$.

A line bundle $\cL$ on $X$ is {\it nef} if $\deg(\cL|_C)\geq0$ for every irreducible curve $C\subset X$. In the following
lemma, we will use this definition to assure the nefness.

\begin{lemm}\label{form}
  Let $\pi\cln X\to \Delta$ satisfy $(*)$ and  $\cL$ be a line bundle on $X$.   Suppose furthermore that $\pi$ is projective and
 \begin{enumerate}[label=(\roman*).]
  \item{ $\cL_s$ is very ample for each $s\neq 0$; }
\item{  $h^0(X_s,\cL_s)=h^0(X_0,\cL_0)$ for each $s\neq 0$.}
    \end{enumerate}
  Then the line bundle $\cL_0$ on $X_0$ is  nef.
\end{lemm}
\begin{proof}
Let
$$
\phi\cln  X\dashrightarrow \mathbb{P}(\pi_* \mathcal{L})
$$
 be the  rational map over $\Delta$ induced by the natural morphism  $\pi^*\pi_* \mathcal{L}\to \mathcal{L}.$
We may assume that $\phi$ is set-theoretically well defined outside an analytic set $Z$
with $\mathrm{codim}(Z,X)\geq2$. 
 Since $\pi$ is flat, we know that $\pi_*\mathcal{L}$ is locally free by the assumption (ii).
 Thus we can assume $\PP(\pi_*\mathcal{L})=\PP^N\times \Delta$ after shrinking the disc small enough, and  the restriction 
 $$\phi|_{X\setminus Z}\cln X\setminus Z\to \PP^N\times \Delta$$ 
 of $\phi$  is a morphism.  
 By our assumption (i),  for each $s\neq 0$, the restriction
$$\phi_s\cln X_s\to  \mathbb{P}^N$$
of $\phi$ on $X_s$ is an embedding defined by the complete linear system $|\mathcal{L}_s|$. The restriction of $\phi$ on $X_0$
$$\phi_0\cln X_0\dashrightarrow  \mathbb{P}^N$$
is a rational map defined by  sections of $\pi_*\mathcal{L}\otimes \mathbb{C}(0)$ on $X_0$, and it is easy to see that $Z$ is contained in $X_0$.

The pullback line bundle ${\cF}:=\phi_{X\setminus Z}^*p^*_1\cO_{\PP^N}(1)$ on ${X\setminus Z}$ satisfies that ${\cF}|_{\pi^{-1}(\Delta^*)}\cong \cL|_{\pi^{-1}(\Delta^*)}$, where  $p_{1}\cln \PP^N\times \Delta\to \PP^N$ is the first projection. By \cite[Example 4.3.12]{Huy},  there exists a natural metric $\tilde{h}$ on $\cO_{\PP^N}(1)$ such that $\frac{\sqrt{-1}}{2\pi}\dbar\partial\log \tilde{h}$ is the Fubini-Study form $\omega_{\mathrm{FS}}$. We denote  the pullback metric of $\tilde h$ on ${\cF}$ by $h={\phi_{X\setminus Z}^*}p_1^*\tilde{h}$.

We will use the trivialization of $\pi$ in the following proof, please see the corresponding notations in Section \ref{sec3}. 
Now we will check the definition of the nefness of $\cL_0$  in four steps.  

{\bf Step 1.} In this step, we will construct $2$-forms  $\gamma_s~(s\in\Delta^*)$ on $X_0$ representing  the class $c_1(\cL_{0})\in H^{2}(X_0,\CC)$.

 Denote by  $h_s$  the restricted metric of $h$ on $\cL_s$, then for each $s\in\Delta^*$ the Chern class
\beq
c_1(\cL_{s})=\left[\frac{\sqrt{-1}}{2\pi}\dbar\partial\log h_s\right]\in  H^{1,1}(X_s,\CC)\nonumber
\eeq
and
\beq\label{inequ-1}
\frac{\sqrt{-1}}{2\pi}\dbar\partial\log h_s=\phi_s^*\omega_{\mathrm{FS}}.
\eeq
 Note that $\frac{\sqrt{-1}}{2\pi}\dbar\partial\log h_s$ is a positive (1,1)-form on $X_s$.

By the pullback formula of the Chern classes, we have
\beq\label{eqnchern}
c_1(T^{-1*}\cL)=T^{-1*}(c_1(\cL)).
\eeq
Let $\tilde{\iota}_s\cln X_0\to X_0\times\Delta$ be the morphism defined by $\tilde{\iota}_s(z)=(z,s)$ for all $z\in X_0$, and let $\iota_s\cln X_s\to X$ denote the inclusion morphism induced by  the submanifold structure of $X_s\subset X$. Then we have the following commutative diagram
\begin{eqnarray*}
\xymatrix{
X_s \ar[r]^{\iota_s}   &  X\ar[d]^{T_0} & X_0\times\Delta\ar[l]_(0.55){T^{-1}}  \\
& X_0\ar@<1ex>[lu]^{\psi_s} \ar[ru]_(0.45){\tilde\iota_s}
}
\end{eqnarray*}
and we know that $\iota_s\circ\psi_s=T^{-1}\circ \tilde{\iota}_s$. Therefore, by \eqref{eqnchern},
\beq
\tilde{\iota}_s^*\left(c_1(T^{-1*}\cL)\right)=\tilde{\iota}_s^*T^{-1*}(c_1(\cL))=\psi_s^*(c_1(\cL_s)).
\eeq
Since the disc $\Delta$ is contractible, the bundles  $\tilde{\iota}_s^*\left(T^{-1*}\cL\right)$ for $s\in \Delta$ are mutually isomorphic as topological bundles.  Hence their first Chern classes coincide, so that
\beq
\psi_s^*\left(c_1(\cL_s)\right)=c_1(\cL_0)\in H^2(X_0,\CC),\nonumber
\eeq
and
$$\left[\frac{\sqrt{-1}}{2\pi}\psi_s^*\left(\dbar\partial\log h_s\right)\right]=\psi_s^*\left[\frac{\sqrt{-1}}{2\pi}\dbar\partial\log h_s\right]\in H^2(X_0,\CC).$$
Thus 
$$\gamma_s:=\frac{\sqrt{-1}}{2\pi}\psi_s^*\left(\dbar\partial\log h_s\right)$$ represents the class $c_1(\cL_0)\in H^{2}(X_0,\CC)$.

{\bf Step 2.} In this step we will calculate the explicit expression of the forms $\gamma_s$ using the coordinates of $X$ locally.

 As in \cite[\S5.3(b)]{KS86}, possibly after shrinking the disc small enough, we can assume that $X$ has  the following finite open cover 
 $$X=\bigcup_{k}\cU_k=\bigcup_{k}(V_k\times\Delta)$$
where $X_0=\cup_k V_k$ and $\cL|_{\cU_k}$ is trivial.  
Let $(\xi^1_k,\cdots,\xi_k^n, s)$ be the holomorphic coordinates of $\cU_k$. Suppose that $\cU_j\cap \cU_k\neq \emptyset$, then the coordinate transformation
\beq
(\xi^1_j,\cdots,\xi_j^n,s)=(g^1_{jk}(\xi_k,s),\cdots,g^n_{jk}(\xi_k,s), s)\nonumber
\eeq
satisfies that all $g^l_{jk}(\xi_k,s)$ are holomorphic functions of $\xi_k^1,\ldots,\xi_k^n,s$. Note that $X$ can be considered as a complex manifold with the complex structure defined on the smooth manifold $X_0\times \Delta$ through the diffeomorphism $T^{-1}\cln X_0\times\Delta \to X$. For any point $(z,s)\in X_0\times \Delta$ with $T^{-1}(z,s)\in \cU_j$, we let $T^{-1}(z,s)=(\xi_j,s)$. Locally we can write
\beq
T^{-1}(z,s)=(\xi_j^1(z,s),\cdots , \xi_j^n(z,s),s),\nonumber
\eeq
where $\xi_j^l(z,s)$ are smooth functions. These functions $(\xi_j^l(z,s))$ can be considered as the local complex coordinates of $X_s$. Let $(z^1,\ldots,z^n)$ be arbitrary local complex coordinates of a point $z$ of $X_0$, then
\beq\label{eqnvhs1}
\xi_j^{l}(z,s)=\xi_j^{l}(z^1,\ldots,z^n,s),~~l=1,\ldots,n
\eeq
are smooth functions of the complex variables $z^1,\ldots,z^n,s$.  For any fixed point $s\in\Delta^*$, $X_s$ is the complex structure of the smooth manifold $X_0$ by the system of local complex coordinates:
\beq\label{eqnvhs2}
\{z\to \xi_j(z,s)|j=1,2,\ldots\},~~\xi_j(z,s)=(\xi_j^1(z,s),\ldots,\xi_j^n(z,s)).
\eeq
Here by ``the smooth manifold $X_0$" we mean the underlying differential manifold of the complex manifold $X_0$.

Let $\cU$ be any element in $\{\cU_k\}_k$.  After shrinking the disc small enough and taking a finer open cover  $\{\cU_k\}_k$ possibly, we may assume that  the holomorphic map
$$\phi|_{\cU\setminus Z}\cln \cU\setminus Z\to \PP^N\times\Delta$$
sends $(\xi,s)=((\xi^1,\ldots,\xi^n,)^t,s)$ to $([1:w^1:\cdots:w^{N}],s)$.  Since $\mathrm{codim}(Z\cap 
\cU, \cU)\geq 2$, the Hartogs’ extension theorem of holomorphic functions implies that the local functions $w^i$ are  well defined on $\cU$.

For each $s\in\Delta$,  let $U_s=\cU\cap X_s\subset X_s$. We still use the notation $\phi_s$ to denote the restriction morphism of $\phi|_{\cU\setminus Z}$ on $X_s$ for simplicity, and let $\phi_s$ send $\xi=(\xi^1,\ldots,\xi^n)^t$ to $[1:w^1_s:\cdots:w^{N}_s]$. 
Let  $w_s=(w_s^1,\ldots,w_s^N)^t$ and $\|w_s\|^2=\sum_k|w^k_s|^2$. The Fubini-Study form $\omega_{\mathrm{FS}}$ on  $\phi_s(U_s)\subset\PP^N$ is 
\begin{eqnarray*}
\frac{\sqrt{-1}}{2\pi}\dbar\partial\log h_s&=&\frac{\sqrt{-1}}{2\pi}\cdot\frac{1}{(1+\|w_s\|^2)^2}\sum_{l,m}h_{lm}d w^l_s\wedge d\bar{w}^m_s,
\end{eqnarray*}
where $h_{lm}=(1+\|w_s\|^2)\delta_{lm}-\bar{w}^l_sw^m_s$. 
Then the following induced form on $U_s$ satisfies that
\begin{eqnarray}\label{eqn-bound}
\begin{split}
\sum_{i,j}f_{ij}d\xi^i\wedge d \bar{\xi}^j 
:=&(1+\|w_s\|^2)^2\cdot\dbar\partial\log h_s\\
=&\sum\limits_{i,j,l,m} \frac{\partial w^l_s}{\partial \xi^i}h_{lm}\frac{ \partial\bar{w}^m_s}{\dbar\bar{\xi}^j}d\xi^i\wedge d \bar{\xi}^j,
\end{split}
\end{eqnarray}
since $dw_s^l=\sum_i \frac{\partial w^l_s}{\partial \xi^i}d\xi^i$. We remark that $w_s^l=w_s^l(\xi,s)$, $f_{ij}=f_{ij}(\xi,s)$ and $\frac{\partial w^l_s}{\partial \xi^i}=\frac{\partial w^l_s}{\partial \xi^i}(\xi,s)$  are all smooth functions of $\xi^1,\ldots,\xi^n,s$ here.

In order to calculate the 2-forms $\gamma_s$ on $X_0$ locally, we apply the deformation theory of complex structures (see \cite[\S5.3(b)]{KS86}) to the above form.  
Fixing the arbitrarily selected point $s\in\Delta^*$, by \cite[Lemma 2.5]{RZ} and \eqref{eqnvhs1},
\beq\label{dfm-stru}
d\xi^i(z,s)=\sum_r \varphi_r^i d{z}^r+\sum_k\bar\lambda_k^id\bar{z}^k,~~d\bar{\xi}^j(z,s)=\sum_t \bar\varphi_t^j d\bar{z}^t+\sum_l\lambda_l^jdz^l,
\eeq
where $\varphi_r^j(z,s),\lambda_k^j(z,s)$ are smooth on $s$ and $\lambda_k^j(z,0)=0$  for any $z\in X_0$.   Thus
\begin{align*}
\psi_s^*\left(f_{ij}d\xi^i\wedge d\bar{\xi}^j\right)
=&f_{ij}\bigg(\sum_r {\varphi}_r^idz^r+\sum_k\bar{\lambda}_k^id\bar{z}^k\bigg)\wedge \bigg(\sum_t \bar\varphi_t^j d\bar{z}^t+\sum_l{\lambda}_l^jdz^l\bigg)\\
=&\sum_{r,t} {\varphi}_r^if_{ij}\bar\varphi_t^jdz^r\wedge d\bar{z}^t+\sum_{k,t} \bar{\lambda}_k^if_{ij}\bar\varphi_t^jd\bar{z}^k\wedge d\bar{z}^t\\
&+\sum_{r,l} {\varphi}_r^i f_{ij}{\lambda}_l^jdz^r\wedge dz^l-\sum_{k,l} {\lambda}_l^j f_{ij}\bar{\lambda}_k^i dz^l\wedge d\bar{z}^k.
\end{align*}
Here we still use the notation $f_{ij}$ to denote its pullback by $\psi^*_s$ for simplicity, that is, we regard $f_{ij}=f_{ij}(\xi(z,s),s)$  as a function  of $z^1,\ldots,z^n,s$.  We deal with the notations $w_s^l,h_{lm}$ and $\frac{\partial w^l_s}{\partial \xi^i}$ similarly in the following context. 
By Step 1, we know that the $(1,1)$-component $(\gamma_s)_{1,1}$ of $\gamma_s$ is 
\begin{align*}
\frac{2\pi}{\sqrt{-1}}\cdot (1+\|w_s\|^2)^2\cdot (\gamma_s)_{1,1}&=\left(\psi_s^*\left((1+\|w_s\|^2)^2\cdot\dbar\partial\log h_s\right)\right)_{1,1}\\&=\sum\limits_{i,j,k,l}\left({\varphi}_i^l f_{lk}\bar{\varphi}_j^k- {\lambda}_i^k f_{lk}\bar{\lambda}_j^l\right) dz^i\wedge d\bar{z}^j.
\end{align*}

  {\bf Step 3.} In this step we will show that  the form $(\gamma_s)_{1,1}$ has some positivity locally on $X_0$ when $s$ is small enough.

Let  $H=(h_{lm})$, then $H=(1+\|w_s\|^2)E-\bar w_sw_s^t$ where $E$ is the unit matrix.  Let $\Phi=(\varphi^k_i),~\Lambda=(\lambda^k_i)$ and $J=(\frac{\partial w^j_s}{\partial \xi^i})$, then $$F:=(f_{ij})=JH\bar J^t=(1+\|w_s\|^2)J\bar J^t- J\bar w_sw_s^t\bar J^t.$$
Since $\lambda_k^j(z,0)=0$ for any $z\in X_0$, we know that the 2-norm $\|\Lambda\|_2$ of the matrix $\Lambda$   tends to 0 as $s$ tends to 0.

 For any $x=(x_1,\ldots,x_n)^t\neq0$,
\begin{align*}
  &\bar x^t(\Phi F \bar\Phi^t-\Lambda F^t\bar\Lambda^t)x\\
  =&\bar x^t\bigg(\Phi \big((1+\|w_s\|^2)J\bar J^t-J\bar w_sw_s^t\bar J^t\big)\bar\Phi^t-\Lambda \big((1+\|w_s\|^2)\bar JJ^t-\bar Jw_s\bar w_s^t J^t\big)\bar \Lambda^t\bigg)x\\
  =&(1+\|w_s\|^2)\|\bar J^t\bar\Phi^tx\|^2-|w_s^t\bar J^t\bar\Phi^tx|^2-(1+\|w_s\|^2)\|J^t\bar \Lambda^tx\|^2+|\bar w_s^t J^t\bar \Lambda^tx|^2\\
  =&\|\bar J^t\bar\Phi^tx\|^2+\|w_s\|^2\|\bar J^t\bar\Phi^tx\|^2\sin^2\alpha_s-\| J^t\bar \Lambda^tx\|^2-\|w_s\|^2\|J^t\bar \Lambda^tx\|^2\sin^2\beta_s\\
    \geq&-\|J^t\bar \Lambda^tx\|^2(1+\|w_s\|^2\sin^2\beta_s),
\end{align*}
 when $s$ is small enough. Here $\alpha_s$ (resp. $\beta_s$) is the angle between $w_s$ and $\bar J^t\bar\Phi^tx$ (resp. $J^t\bar \Lambda^tx$).  
Since the local functions $w^i$ on $\cU$ are well defined, we know that $w^i$ and $J$ are bounded on $\cU$ probably after shrinking the disc $\Delta$ and the cover $\{\mathcal{U}_k\}$.   Therefore the form $(\gamma_s)_{1,1}$ has the following positivity  on $U_0=\cU\cap X_0$: for  every $\epsilon>0$,
\begin{align}\label{eqn-pos}
\begin{split}
  &\bar x^t\cdot\frac{1}{(1+\|w_s\|^2)^2}(\Phi F \bar\Phi^t-\Lambda F^t\bar\Lambda^t)\cdot x+\epsilon\|x\|^2\\
    \geq&-\frac{1+\|w_s\|^2\sin^2\beta_s}{(1+\|w_s\|^2)^2} \|J^t\bar \Lambda^tx\|^2+\epsilon\|x\|^2\\
      \geq&-\frac{1+\|w_s\|^2\sin^2\beta_s}{(1+\|w_s\|^2)^2} \|J^t\|_2^2\|\bar \Lambda^t\|_2^2\|x\|^2+\epsilon\|x\|^2>0,
\end{split}
\end{align}
 when $s$ is small enough.

{\bf Step 4.} In this step, we will finish the proof.

 Let $\omega_0$ be a fixed K\"ahler metric on $X_0$ such that 
 $$\omega_0|_{U_0}=\frac{\sqrt{-1}}{2\pi}\sum w_{ij}dz^i\wedge d\bar{z}^j$$
 with $(w_{ij})\ge E$. Since $\cU$ is an arbitrary open set in the finite open cover $\{\cU_k\}_k$ of $ X$, we have that \eqref{eqn-pos} holds true on $X_0$. Therefore, for any $\epsilon>0$, we can choose a small enough $s$ such that $(\gamma_s)_{1,1}+\epsilon \omega_0$ is a positive  (1,1)-form on $X_0$. 
By Step 1, $\gamma_s$ represents the class $c_1(\cL_0)$, thus for every irreducible curve $C\subset X_0$,
 $$\deg(\cL_0|_C)=\int_{C_{\text{reg}}} \gamma_s=\int_{C_{\text{reg}}} (\gamma_s)_{1,1}\geq\int_{C_{\text{reg}}}(-\epsilon \omega_0)=-\epsilon\int_{C_{\text{reg}}}\omega_0,$$
 where $C_{\text{reg}}$ is the  open subset of all regular points on $C$. Since $\epsilon>0$ is small enough, we know that $\deg(\cL_0|_C)\geq0$, and thus $\cL_0$ is nef on $X_0$ by definition.

 \end{proof}

\section{Deformation rigidity}\label{sec5n}
\subsection{Semiampleness} In this subsection, we will show that $\omega_{X_0}$ is semiample. We show the nefness of $\omega_{X_0}$ firstly.
 \begin{prop}\label{sem}  Let $\pi\cln X\to \Delta$ and $S$ satisfy $(*)$.  If  $\omega_{S}$ is nef,  then $\omega_{X_0}$ is also nef.
\end{prop}
\begin{proof}  Let $m$ be any positive integer.  By Proposition \ref{proj}, the morphism $\pi$ is projective, and we can take a $\pi$-very ample line bundle $\cL$  on $X$. Define
 $$\cM^m=\omega^{\otimes m}_X\otimes \mathcal{L},~~\cM^m_{s}=\big(\omega^{\otimes m}_X\otimes \mathcal{L}\big)|_{X_s}.$$

First  we will show that there exists a positive integer $c_m$  such that the line bundles $(\cM^m_{s})^{\otimes c_m}$ satisfy all the conditions of Lemma \ref{form}.

(i)  By \cite[Theorem 0.2]{Siu-1}, there exists a positive integer $c_m$ such that the line bundles $(\cM^m_{s})^{\otimes c_m}$  are very ample for all $s\in\Delta^*$, where the constant $c_m$ depends only on the numbers $n$ and $c^{n-i}_1(\cM^m_{s})c_1^i(\omega_{X_s})~(0\leq i\leq n)$.

(ii) By \cite{Siu2} (or \cite[Theorem 4.1]{FS},\cite{Pa07}), we know that, for each $s\neq 0$,
$$ h^0(X_s,(\cM^m_{s})^{\otimes c_m})=h^0(X_0,(\cM^m_{0})^{\otimes c_m}).$$

So, we  obtain that $(\cM^m_{0})^{\otimes c_m}$ is nef by Lemma  \ref{form}.  Furthermore, $\omega^{\otimes m}_{X_0}\otimes \mathcal{L}_0$ is nef.

Thus, by the arbitrariness of $m\in\mathbb{N}$, we have that $\omega_{X_0}$ is nef.
\end{proof}

Combining with the invariance of plurigenera, we will obtain the semiampleness of $\omega_{X_0}$ by the main result  of  \cite{K1}.

\begin{prop}\label{sem2}  Let $\pi\cln X\to \Delta$ and $S$ satisfy $(*)$.   If  $\omega_{S}$ is semiample, then $\omega_{X_0}$ is also semiample.
\end{prop}
\begin{proof}  Since $\omega_{X_t}$ is semiample for each $t\neq 0$,  we have that $\omega_{X_t}$  is nef. Moreover,
\beq\label{eqn1}
\nu(X_t)=\kappa(X_t)~~\mbox{and }\kappa(X_t)=\kappa(X_0), \quad \mbox{for}\quad t\neq 0,
\eeq
where the first equality follows from \eqref{eqndim} and the second one is from \cite[Theorem 1.2]{RT} (see also \cite{Siu1,Siu2,Pa07}).
Similarly, we know that
\beq\label{eqn2}
\nu(X_t)= \nu(X_0)\ge \kappa(X_0)
\eeq
for all $t\in\Delta$, since  $\omega_{X_0}$ is nef by Proposition  \ref{sem}.
Therefore, by \eqref{eqn1} and \eqref{eqn2}, we get that
$$
\nu(X_0)=\kappa(X_0),
$$
and thus $\omega_{X_0}$ is semiample  by  \eqref{eqndim}.
\end{proof}

\subsection{Proof of Theorem \ref{res1} for $m=1$}\label{sect-stablemnfd}
We are ready to prove the main result Theorem \ref{res1} for the case that the smooth family $\pi$ is over the unit disc $\Delta$. In this case, we may assume that $\pi$ satisfies $(*)$.
\begin{theo}\label{res1-1} Let $\pi\cln X\to \Delta$ and $S$ satisfy $(*)$.  If  $\omega_{S}$ is semiample, then the central fiber $X_0$ is  biholomorphic to $ S$.
\end{theo}
\begin{proof}
By our construction in Section \ref{sec3} we can take a $\pi$-very ample line bundle $\cL$  on $X$. Therefore we obtain a family  $(\pi\cln X\to\Delta,\cL)$ of polarized manifolds.  Since the dualizing sheaf $\omega_S$ is semiample, we know that $\omega_{X_0}$ is also semiample by Proposition \ref{sem2}. So the family $(\pi\cln X\to\Delta,\cL)$ induces a  holomorphic morphism $\mu_\pi$ to the moduli space $P_h$ of polarized manifolds,
$$\mu_\pi\cln \Delta\to P_h,$$
where $h(m)=\chi(\cL^{\otimes m})$ for all $m\in\ZZ$.

 For any $t_1,t_2\in\Delta^*$, $\cL_{t_1}$ and $\cL_{t_2}$ are homological equivalent (see \cite{Fu}) by our construction of $\cL$, and then they  are numerical equivalent. So $\mu_\pi(t_1)=\mu_\pi(t_2)$ by definition. Since the moduli space $P_h$ is separated, the morphism $\mu_\pi$ is constant. Therefore $X_0\cong X_t\cong S$ for all $t\neq0$.
\end{proof}
\begin{remark}
The line bundle $\cL$ constructed in Section \ref{sec3} satisfies that $c_1(\cL_{t_1})=c_1(\cL_{t_2})$. If the Picard group $\mathrm{Pic}(S)$ of $S$ is torsion-free, then $\cL_{t_1}\cong\cL_{t_2}$. Under this condition, Theorem \ref{res1-1} can be derived from \cite[Theorem 2]{MM},
and thus the separatedness of the moduli space $P_h$ is  not necessary in the above proof.  
\end{remark}

\subsection{Proof of Theorem \ref{res1}} Now we give the proof of the main result Theorem \ref{res1}.
\begin{proof}[Proof of Theorem \ref{res1}]
    Let $z$ be  an arbitrary point in $\Delta^m\setminus U$. We may choose a unit disc $\Delta_z\cong \{t\in\CC||t|< 1\}$ passing through $z$ such that the fiber $X_p$ is isomorphic to $S$ for every  $p\in \Delta_z\setminus\{z\}$. Then the restriction morphism $\pi|_{\pi^{-1}(\Delta_z)}\cln \pi^{-1}(\Delta_z)\to \Delta_z$ satisfies all the conditions of Theorem \ref{res1-1}, thus $X_z\cong S$ by  Theorem \ref{res1-1}. 
\end{proof}

\section{Isotriviality for smooth families}\label{sec6n}

In this section, we will give some applications of our main result to the isotriviality of smooth families. To this end, we will give a detailed description for the reader's convenience.

 Let $f\colon X\to B$ be a smooth  projective family between smooth projective varieties.  A {\it minimal closed field of definition} of $f$ is a minimal element with respect to the inclusion of the set of all the algebraically closed fields $K$ contained in $\overline{\CC(B)}$  which satisfy the following condition: 
  there  exists a finitely generated extension $L$ of $K$ such that
  $$Q(L\otimes_K\overline{\CC(B)})\cong Q(\CC(X)\otimes_{\CC(B)}\overline{\CC(B)})~~\mathrm{over}~~\overline{\CC(B)},$$
  where $Q$ denotes the fraction field.

  This condition is equivalent to the following one (see \cite{K0}): there is a projective family $f^!\colon X^!\to B^!$ with $\overline{\CC(B^!)}=K$, a generically finite and generically surjective morphism $\pi\cln\bar{B}\to B$ and a generically surjective morphism $\rho\cln \bar{B}\to B^!$ such that the induced projective family $\bar f\cln \bar X\to\bar B$ from $f$ by $\pi$ is birationally equivalent to that from $f^!$ by $\rho$ as illustrated in the following commutative diagram
   $$\xymatrix{
  X \ar[d]_{f} & \bar X \ar[l] \ar[r] \ar[d]_{\bar f} & X^!\ar[d]^{f^!}\\
   B             & \bar B \ar[l]_{\pi} \ar[r]^{\rho} &   B^!.                }
                $$

   A minimal closed field of definition of $f$ is a priori not unique. However,  it is unique in the case where the geometric generic fiber $X_{\eta}$ has a good minimal model. The {\it variation} $\mathrm{Var}(f)$ of $f$ is defined by
   $$\mathrm{Var}(f)=\mathrm{trans.deg.}_\CC K$$
   for an arbitrarily chosen minimal closed field of definition $K$ of $f$. In  order to avoid the ambiguity in the definition, we may take the minimum of those values given by minimal closed fields of definition. Roughly speaking, $\mathrm{Var}(f)$ is the number of moduli of fibers of $f$ in the sense of birational geometry.  Note that $0\leq \mathrm{Var}(f)\leq \dim B$.

So, we know that $f$ is birationally isotrivial is equivalent to $\mathrm{Var}(f)=0$ by definition.

\begin{theo}\label{g-gen-intr}
  Let $f\colon X\to B$ be a smooth projective family between smooth projective varieties. Suppose that the generic fiber of $f$  is  good minimal. If $f$ is birationally isotrivial, then $f$ is isotrivial.
\end{theo}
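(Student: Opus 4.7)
The plan is to reduce the claim to Theorem \ref{gres1}, which already gives isotriviality of such a smooth fibration once its fibers over some Zariski open subset of the base are all mutually isomorphic. Thus the task is to upgrade ``birationally isotrivial'' to ``isomorphic fibers over a Zariski open''.

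First I would unpack the definition of birational isotriviality. Since $\mathrm{Var}(f)=0$, there exist a dense Zariski open $B^0\subset B$, a smooth curve $\bar B$, a finite surjective morphism $\pi\cln\bar B\to B^0$, and a birational map $\Phi\cln \bar X\defeq X\times_B\bar B\dashrightarrow \bar B\times F$ over $\bar B$, where $F$ is a smooth good minimal representative of the generic fiber of $f$. By compactifying $\bar B$ to a smooth projective curve and extending $\bar X$ to a smooth projective model, I may assume $\bar f\cln\bar X\to\bar B$ is a smooth projective morphism over a dense open of $\bar B$ with generic fiber $F$.

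Next I would argue that, over some dense Zariski open $V\subset\bar B$, all fibers of $\bar f$ are isomorphic. For every $b$ in the smooth locus of $\bar f$, $\Phi_b\cln \bar X_b\dashrightarrow F$ is a birational map between two smooth good minimal projective manifolds in the same birational class. The set of isomorphism classes of smooth good minimal models birational to $F$ is discrete, and the function $b\mapsto[\bar X_b]$ is constructible in $b$; hence by the connectedness of $\bar B$ it must be constant on some dense open $V\subset\bar B$. Replacing $F$ by a representative of this common class, every fiber of $\bar f$ over $V$ is isomorphic to $F$. Applying Theorem \ref{gres1} to $\bar f$ yields that $\bar f$ is isotrivial, and since $\pi\cln\bar B\to B^0$ is surjective with $\bar X_b\cong X_{\pi(b)}$, every fiber of $f$ over $B^0$ is isomorphic to $F$. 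A second application of Theorem \ref{gres1}, this time to $f$ itself, then gives the desired isotriviality.

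The main obstacle is the step showing that the isomorphism class of $\bar X_b$ is locally constant in $b$. In principle this combines a finiteness statement for smooth good minimal models within a birational class (known in many but not all cases) with constructibility of fiberwise isomorphism type. An alternative route, avoiding finiteness, is to use the separatedness of the moduli space $P_h$ of polarized good minimal manifolds introduced in Section \ref{sectkahler}: equip $\bar X$ with a natural polarization built from $\omega_{\bar X/\bar B}$ (whose fibers are semiample, with uniform Hilbert polynomial by invariance of plurigenera), obtain a moduli map $\bar B\to P_h$, and conclude constancy on a dense open using separatedness together with birational isotriviality.
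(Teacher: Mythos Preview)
Your overall strategy—reduce to Theorem \ref{gres1} by first showing that all fibers over some Zariski open of $B$ are isomorphic—is exactly the paper's strategy. The divergence is in how you pass from ``birationally isotrivial'' to ``fibers isomorphic over an open'', and this is where your argument has a genuine gap.

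You argue that for $b$ generic the fiber $\bar X_b$ is a smooth good minimal model birational to $F$, and then appeal to discreteness of the set of isomorphism classes of such models together with constructibility. But finiteness (or discreteness in any useful sense) of smooth minimal models in a fixed birational class is \emph{not} known in general: already for Calabi--Yau threefolds the number of minimal models can be infinite, and controlling this is the content of the Kawamata--Morrison cone conjecture. So the premise you need is open in exactly the range (intermediate Kodaira dimension) where the theorem is supposed to apply. Your alternative route via $P_h$ also does not close the gap: $\omega_{\bar X/\bar B}$ is only semiample on fibers, hence does not furnish a polarization in the sense required for $P_h$; and if you pass to an auxiliary $\pi$-ample line bundle, birational isotriviality gives no a priori control over the resulting moduli map, so separatedness of $P_h$ alone cannot force constancy.

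The paper bypasses this entirely by invoking Kawamata's structural results \cite[Lemma 7.1 and Corollary 7.3]{K0}. Because the generic fiber is good minimal, these produce—directly from $\mathrm{Var}(f)=0$—a nonempty open $V\subset B$ and an \'etale cover $V'\to V$ such that $X\times_V V'\cong F\times V'$ as schemes over $V'$, an honest isomorphism rather than a birational map. Hence all fibers over $V$ are isomorphic to $F$, and Theorem \ref{gres1} finishes. The point is that Kawamata's lemma already encodes, in the good minimal setting, the ``uniqueness up to \'etale base change'' that you were trying to reprove by hand.
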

\begin{proof}
Since the generic fiber of $f$ is a smooth good minimal algebraic variety over $\CC(B)$, by \cite[Lemma 7.1]{K0},
 there are nonsingular projective varieties $\bar B$ and $B^!$, a non-empty open subset $U$ of $\bar B$,  a generically finite and surjective morphism $\bar \tau\cln \bar B\to B$, a surjective morphism $\bar\psi\cln \bar B\to B^!$, and a normal family $f^!\cln X^!\to B^!$ which satisfy that  $$X\times_{B}U\cong X^!\times_{B^!}U,$$ and the generic fiber  of $f^!$ is a good minimal algebraic variety over $\CC(B^!)$.
    \[\begin{CD}
X @<<<  \bar X @>>> X^!\\
@V {f} VV              @V {\bar f} VV         @V{\bar f^!} VV \\
B @<{\bar\tau}<< \bar B @>{\bar\psi}>> B^!
\end{CD}
\]
Furthermore, by \cite[Corollary 7.3]{K0},   the minimal closed field of definition of $f$ coincides with $\overline{\CC(B^!)}$.  Now  $f$ is birationally isotrivial, that is, $\mathrm{Var}(f)=0$, so there is a non-empty open subset $V$ of $B$  and an \'etale covering $V'\to V$ such that
$$X\times_VV'\cong F\times V'$$
 for some projective variety $F$.

  Thus all the fibers of $f$ over $V$ are isomorphic to $F$, and then all the fibers of $f$ are isomorphic by Theorem \ref{res1}.
\end{proof}

\begin{coro}\label{g-gen-2}
  Let $f\colon X\to B$ be a smooth projective family between smooth projective varieties. Suppose that either $F$ is of general type or $\dim F\leq3$. If the generic fiber of $f$  is minimal and $\deg(\det f_*\omega_{X/B}^{\otimes k})=0$ for all $k>0$, then    $f$ is isotrivial.
\end{coro}
\begin{proof}
  Since $\deg(\det f_*\omega_{X/B}^{\otimes k})=0$ for each $k>0$,  we have that $\mathrm{Var}(f)=0$ by \cite[Theorem 1.1]{K0}. Then the result is directly from Theorem \ref{g-gen-intr}.
\end{proof}

\begin{coro}\label{g-gen-intr-coro}
  Let $f\colon X\to B$ be a smooth  projective family between smooth projective varieties where $B$ is a  curve. Suppose that the generic fiber $F$ of $f$ is minimal  of general type and $\deg(\det f_*\omega_{X/B}^{\otimes k})=0$ for some sufficiently large $k$. Then $f$ is isotrivial.
\end{coro}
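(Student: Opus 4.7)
The plan is to reduce the corollary to Theorem~\ref{g-gen-intr}. Since $F$ is minimal and of general type, the abundance theorem in the general-type case (see \cite{K0}) gives that $\omega_F$ is semiample; thus $F$ is good minimal, and Theorem~\ref{g-gen-intr} applies as soon as $f$ is shown to be birationally isotrivial.

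To establish birational isotriviality, the strategy mirrors that of Corollary~\ref{g-gen-2}: deduce $\mathrm{Var}(f) = 0$ from the vanishing hypothesis on $\deg\det f_*\omega_{X/B}^{\otimes k}$. The subtlety is that the vanishing is assumed only for a single $k \geq 2$, so the hypothesis of Corollary~\ref{g-gen-2} is not in force. The key input here is a sharper form of \cite[Theorem 1.1]{K0} valid under the general-type hypothesis: for a smooth fibration over a curve whose generic fiber is of general type, Viehweg's weak positivity of $f_*\omega_{X/B}^{\otimes k}$ combined with a Viehweg--Zuo type big-ness criterion (compare \cite{VZ01, VZ02}) shows that $\mathrm{Var}(f) = \dim B = 1$ already implies $\deg\det f_*\omega_{X/B}^{\otimes k} > 0$ for every $k \geq 2$. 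Contrapositively, the existence of some $k \geq 2$ with $\deg\det f_*\omega_{X/B}^{\otimes k} = 0$ forces $\mathrm{Var}(f) = 0$.

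With $\mathrm{Var}(f) = 0$ in hand, $f$ is birationally isotrivial by definition, and Theorem~\ref{g-gen-intr} concludes that $f$ is isotrivial. The main obstacle is the second step: locating or extracting the single-$k$ big-ness statement under the general-type assumption, which is the essential new ingredient separating this corollary from Corollary~\ref{g-gen-2}, where the stronger ``vanishing for all $k$'' hypothesis permitted a direct appeal to \cite[Theorem 1.1]{K0}.
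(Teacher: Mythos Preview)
Your approach is essentially the same as the paper's: both argue by contraposition that if $f$ were not (birationally) isotrivial then $\mathrm{Var}(f)=1$, forcing $\det f_*\omega_{X/B}^{\otimes m}$ to be ample for all $m\ge 2$, contradicting the vanishing at a single $k\ge 2$; then Theorem~\ref{g-gen-intr} closes the argument. The only substantive difference is in the sourcing of the key step you flag as ``the main obstacle'': the paper does not appeal to a Viehweg--Zuo bigness criterion but instead uses \cite[Theorem on p.~363]{Kol} (which gives ampleness of $\det f_*\omega_{X/B}^{\otimes \eta}$ for \emph{some} $\eta>0$ when $\mathrm{Var}(f)=\dim B$) followed by \cite[Theorem 0.1]{EW} (which propagates this to ampleness of $\det f_*\omega_{X/B}^{\otimes m}$ for \emph{all} $m\ge 2$). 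With these two references in hand the step you were worried about is immediate.
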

\begin{proof}

 Suppose $f$ is not isotrivial,   $\mathrm{Var}(f)\neq0$ by Theorem \ref{g-gen-intr} and thus $\mathrm{Var}(f)=\dim B=1$. Then by \cite[Theorem on p.363]{Kol} $ f_*\omega_{ X/B}^{\otimes \eta}$ is big for some $\eta>0$. Since $F$ is of general type, by the base-point-free theorem, there exists an integer $m_0$ such that $|\omega_{X_t}^{\otimes m}|$ is base point free  for $m>m_0$. Thus $f^*f_*\omega_{X/B}^{\otimes k}\to \omega_{X/B}^{\otimes k}$ is surjective.  By \cite[Theorem 0.1]{EW}, this implies that  $\det f_*\omega_{ X/B}^{\otimes k}$ is big for sufficiently large $k$,  which leads to a contradiction.
 \end{proof}



\bibliographystyle{amsplain}

\begin{thebibliography}{9}

\bibitem{Ar}  Arakelov, Ju. S., \emph{ Families of algebraic curves with fixed
degeneracies}, Izv. Akad. Nauk SSSR Ser. Mat.  { 35} (1971), 1269-1293.

\bibitem{BanS} B\u{a}nic\u{a}, C., St\u{a}n\u{a}\c{s}il\u{a}, O., \emph{Algebraic methods in the global theory of complex spaces}, Translated from the Romanian, Editura Academiei, Bucharest; John Wiley  Sons, London-New York-Sydney, 1976. 


\bibitem{Ba1} Barlet, D., \emph{Two semi-continuity results for the algebraic dimension of compact complex manifolds}, J. Math. Sci. Univ. Tokyo 22 (2015), no. 1, 39-54.

\bibitem{BB15} Bogomolov, F. A., B\"ohning, C., Graf von Bothmer, H.-C., \emph{Birationally isotrivial fiber spaces}, Europ. J. Math. 2 (2016), no.1, 45-54.





\bibitem{Bo}  Bott, R.,  \emph{Homogeneous vector bundles}, Ann.  Math. 66 (1957), 203-248.












\bibitem{CMP} Carlson, J., M\"uller-Stach, S., Peters, C., \emph{Period mappings and period domains},  Cambridge Studies in Advanced Mathematics, 168. Cambridge University Press, Cambridge, 2017.





\bibitem{EW} Esnault, H., Viehweg, E., \emph{ Ample sheaves on moduli schemes},
Algebraic Geometry and Analytic Geometry, Berlin, Heidelberg: Springer 1991, 53-80.

\bibitem{FS} Filipazzi, S., Svaldi, R., \emph{Invariance of plurigenera and boundedness for generalized pairs}, Mat. Contemp. 47 (2020), 114-150.

\bibitem{FG}Fischer, W., Grauert, H., \emph{Lokal-triviale Familien kompakter komplexer Mannigfaltigkeiten},
Nachr. Akad. Wiss. Gottingen Math.-Phys. Kl. II (1965), 89-94.



\bibitem{F1} Fujiki, A.,    \emph{Closedness of the Douady spaces of compact K\"ahler spaces}, Publ. Res. Inst. Math. Sci. 14 (1978/79), no. 1, 1-52.

\bibitem{Fu} Fulton, W., \emph{Intersection theory}, Ergeb. Math. Grenzgeb. (3), 2, Springer-Verlag, Berlin, 1984, xi+470 pp.

\bibitem{G68} Griffiths, A., \emph{Periods of integrals on algebraic manifolds. II. Local study of the period mapping}, Amer. J. Math. 90 (1968), 805-865.


\bibitem{Huy97} Huybrechts, D., \emph{Birational symplectic manifolds and their deformations}, J. Diff. Geom. 45 (1997), 488-513. 

\bibitem{Huy99} Huybrechts, D., \emph{Compact hyperk\"ahler manifolds: Basic results}, Invent. Math. 135 (1999), 63-113. Erratum: Invent. math. 152 (2003), 209-212.


\bibitem{Huy} Huybrechts, D., \emph{Complex geometry An introduction}, Universitext, Springer-Verlag, Berlin, 2005.


\bibitem{HN1} Hwang, J.-M., Mok, N., \emph{Rigidity of irreducible Hermitian symmetric spaces of the compact type under K\"ahler deformation},
 Invent. Math. 131 (1998), no. 2, 393-418.

\bibitem{HN2} Hwang, J.-M., Mok, N., \emph{Deformation rigidity of the rational homogeneous space associated to a long simple root}, Ann. Scient.
Ec. Norm. Sup. 35 (2002), no. 2, 173-184.

\bibitem{HN3} Hwang, J.-M., Mok, N., \emph{Prolongations of infinitesimal linear automorphisms of projective varieties and rigidity of rational
homogeneous spaces of Picard number 1 under K\"ahler deformation}, Invent. Math. 160 (2005), no. 3, 591-645.


 \bibitem{K0} Kawamata, Y.,  \emph{    Minimal models and the Kodaira dimension of algebraic fiber spaces}, J. Reine Angew. Math. 363 (1985), 1-46.

\bibitem{K1} Kawamata, Y.,  \emph{Pluricanonical systems on minimal algebraic varieties}, Invent. Math. 79 (1985), no. 3, 567-588.

\bibitem{KMM} Keel, S., Matsuki, K., McKernan, J., \emph{Log abundance theorem for threefolds}, Duke Math. J. 75 (1994), 99-119.


\bibitem{KS86} Kodaira, K., \emph{Complex manifolds and deformation of complex structures. Translated from the Japanese by Kazuo Akao. With an appendix by Daisuke Fujiwara}, Grundlehren der mathematischen Wissenschaften [Fundamental Principles of Mathematical Sciences], 283. Springer-Verlag, New York, 1986.

\bibitem{KS} Kodaira, K., Spencer, D., \emph{On deformations of complex analytic structures, II}, Ann. Math. 67 (1958), 403-466.

\bibitem{KoS60}  Kodaira, K., Spencer, D., \emph{On deformations of complex analytic structures, III. Stability theorems for complex structures}, Ann.  Math.  71 (1960), 43-76.

\bibitem{Kol} Koll\'ar, J., \emph{Subadditivity of the Kodaira dimension: Fibers of general type}, Advanced Studies in Pure Math. 10, Algebraic Geometry Sendai 85. Kinokuniya-North Holland, 1987,  361-398.

 \bibitem{Kov96} Kov\'acs, S. J.,   \emph{Smooth families over rational and elliptic curves}, J. Algebraic Geom. 5 (1996), no. 2, 369-385.

  \bibitem{Kov02} Kov\'acs, S. J.,   \emph{Logarithmic vanishing theorems and Arakelov-Parshin boundedness for singular
  varieties}, Compositio Math. 131 (2002), no. 3, 291-317.

  \bibitem{Kov03} Kov\'acs, S. J.,   \emph{ Families of varieties of general type: the Shafarevich conjecture and related problems}, Higher dimensional varieties and rational points (Budapest, 2001), Bolyai Soc. Math. Stud., vol. 12, Springer, Berlin, 2003, pp. 133-167.

\bibitem{L1} Li, M.-L., \emph{A note on holomorphic families of Abelian varieties}, Proc. Amer. Math. Soci. 150 (2022), no. 4, 1449-1454.

\bibitem{LX} Liu, K., Xia, W., \emph{Remarks on BCOV invariants and degenerations of Calabi-Yau manifolds}, Sci. China Math.  62 (2019), no. 1, 171-184.

\bibitem{MM} Matsusaka, T., Mumford, D., \emph{Two fundamental theorems on deformations of polarized varieties}, Amer. J. Math. 86 (1964), 668-684.



\bibitem{Pa}  Parshin, A. N., \emph{Algebraic curves over function fields
I}, Math. SSSR Izv. {2} (1968), 1145-1170.

\bibitem{Pa07} P\u{a}un M., \emph{Siu’s invariance of plurigenera: a one-tower proof}, J. Diff. Geom. 76 (2007), no. 3, 485-493.


\bibitem{P3}Popovici, D., \emph{Deformation limits of projective manifolds: Hodge numbers and strongly Gauduchon metrics}, Invent. Math. 194 (2013), no. 3, 515-534.

\bibitem{RT1} Rao, S., Tsai, I., \emph{Deformation limit and bimeromorphic embedding of Moishezon manifolds}, Commun. Contemp. Math. 23 (2021), no. 8, Paper No. 2050087, 50 pp.

\bibitem{RT} Rao, S., Tsai, I.,   \emph{Invariance of plurigenera and Chow-type lemma},  Asian J. Math.	26 (2022), no. 4, 507-554.

\bibitem{RZ} Rao, S., Zhao, Q., \emph{Several special complex structures and their deformation properties}, J. Geom. Anal. 28 (2018), no. 4, 2984-3047.





\bibitem{Siu-1} Siu, Y.-T., \emph{An effective Matsusaka big theorem}, Ann. Inst. Fourier (Grenoble) 43 (1993), no. 5, 1387-1405.

\bibitem{Siu0} Siu, Y.-T.,  \emph{Nondeformability of the complex projective space}, J. Reine Angew. Math. 399 (1989), 208-219.

\bibitem{Siu1} Siu, Y.-T.,  \emph{Invariance of plurigenera}, Invent. Math. 134 (1998), no. 3, 661-673.

\bibitem{Siu2}  Siu, Y.-T.,  \emph{Extension of twisted pluricanonical sections with plurisubharmonic weight and invariance of semipositively twisted plurigenera for manifolds not necessarily of general type}, Complex geometry (G\"ottingen, 2000), 223-277, Springer, Berlin, 2002.
    

\bibitem{Vi0} Viehweg, E., \emph{Quasi-projective quotients by compact equivalence relations}, Math. Ann. 289 (1991), no. 2, 297-314.

\bibitem{Vi01} Viehweg, E., \emph{Positivity of direct image sheaves and applications to families of higher dimensional
manifolds}, School on Vanishing Theorems and Effective Results in Algebraic Geometry (Trieste, 2000), ICTP Lect. Notes, vol. 6, Abdus Salam Int. Cent. Theoret. Phys., Trieste, 2001, pp. 249-284.

\bibitem{VZ01} Viehweg, E., Zuo, K.,  \emph{On the isotriviality of families of projective manifolds over curves}, J.
    Algebraic Geom. 10 (2001), no. 4, 781-799.

\bibitem{VZ02} Viehweg, E., Zuo, K.,  \emph{Base spaces of non-isotrivial families of smooth minimal models}, Complex
geometry (G\"ottingen, 2000), Springer, Berlin, 2002, pp. 279-328.

\bibitem{Voi} Voisin, C.,  \emph{Hodge theory and complex algebraic geometry. I. Translated from the French by Leila Schneps}, Cambridge Studies in Advanced Mathematics, 76, Cambridge University Press, Cambridge, 2007.

\bibitem{WW23} Wei, C., Wu, L., \emph{Isotriviality of smooth families of varieties of general type}, Manuscripta Math. 172 (2023), no. 1-2, 139-168.

\end{thebibliography}

{\small {School of Mathematics, Hunan University, China

 {\it E-mail address}: mulin@hnu.edu.cn}

 {School of Mathematical Sciences, Dalian University of Technology,  China

 {\it E-mail address}: xlliu1124@dlut.edu.cn}}

\end{document}